\newcommand{\MATLAB}{\textsc{Matlab}\xspace}
\theoremstyle{definition}
\newtheorem{thm}{Theorem}
\newtheorem{lem}[thm]{Lemma}
\theoremstyle{definition}%
\newtheorem{defn}{Definition}%
\newtheorem{exa}{Example}
\newcommand{\C}{\underline{\bf{C}}}
\newcommand{\A}{\underline{\bf A}}
\newcommand{\X}{\underline{\bf X}}
\newcommand{\Y}{\underline{\bf Y}}
\newcommand{\I}{\underline{\bf I}}
\newcommand{\B}{\underline{\bf B}}
\newcommand{\Q}{\underline{\bf Q}}
\newcommand{\U}{\underline{\bf U}}
\newcommand{\tS}{\underline{\bf S}}
\newcommand{\V}{\underline{\bf V}}
\begin{document}

\title{An Efficient Randomized Fixed-Precision Algorithm for Tensor Singular Value Decomposition}

 \author{
 Salman~Ahmadi-Asl$^{\dag}$

\thanks{$^{\dag}$Skolkovo Institute of Science and Technology (SKOLTECH), Center for Artificial Intelligence Technology, Moscow, E-mail: s.asl@skoltech.ru. 
}
}

\maketitle

\begin{abstract}
The existing randomized algorithms need an initial estimation of the tubal rank to compute a tensor singular value decomposition. This paper proposes a new randomized fixed-precision algorithm which for a given third-order tensor and a prescribed approximation error bound, automatically finds an optimal tubal rank and the corresponding low tubal rank approximation. The algorithm is based on the random projection technique and equipped with the power iteration method for achieving a better accuracy. We conduct simulations on synthetic and real-world datasets to show the efficiency and performance of the proposed algorithm. 
\end{abstract}

\begin{IEEEkeywords}
Tubal tensor decomposition, randomization, fixed-precision algorithm.
\end{IEEEkeywords}

\IEEEpeerreviewmaketitle

\section{Introduction}

\IEEEPARstart{T}{ensor} decompositions are efficient tools for multi-way data processing (analysis). In particular, they can be used for making reduction on the data tensors and compressing them without destroying their intrinsic multidimensional structure. In the past few decades, several types of tensor decompositions have been introduced such as CANDECOMP/PARAFAC decomposition (CPD) \cite{hitchcock1928multiple,hitchcock1927expression}, Tucker decomposition \cite{tucker1963implications,tucker1964extension,tucker1966some} and its special case, i.e. Higher Order SVD \cite{de2000multilinear}, Block Term decomposition \cite{de2008decompositionsI,de2008decompositionsII,de2008decompositionsIII}, Tensor Train/Tensor Ring\footnote{It is also known as Tensor Chain decomposition.} decomposition \cite{oseledets2011tensor,zhao2016tensor,espig2012note}, tensor SVD (t-SVD) \cite{kilmer2011factorization,kilmer2013third,braman2010third}, each of which generalizes the notion of matrix rank to tensors in an efficient way. They have been successfully applied in many applications such as signal processing \cite{cichocki2015tensor}, machine learning \cite{sidiropoulos2017tensor}, blind source separation (\cite{comon2010handbook,cichocki2015tensor}), chemometrics, \cite{leardi2005multi}, feature extraction/selection \cite{phan2010tensor}. For a comprehensive study on tensors and their applications, we refer to (\cite{cichocki2016tensor,cichocki2017tensor,kolda2009tensor}). Tensor SVD (t-SVD) is a special kind of tensor decomposition representing a third-order tensor as the t-product of three third-order tensors where, the middle tensor is f-diagonal  (\cite{kilmer2011factorization,kilmer2013third,braman2010third,gleich2013power}).
This decomposition has found many applications including deep learning (\cite{newman2018stable,newman2019step}), tensor completion (\cite{zhang2016exact,zhang2014novel}), numerical analysis (\cite{lund2018new,lund2020tensor,miao2020generalized,miao2020t}), image reconstruction \cite{soltani2016tensor}. There are mainly several randomized algorithms (\cite{zhang2018randomized,tarzanagh2018fast,qi2021t}) to decompose a tensor into the t-SVD format but all of them need an estimation of the tubal rank\footnote{Such randomized algorithms are called randomized fixed-rank algorithms.} which may not be an obvious task. The work \cite{tarzanagh2018fast} utilizes the randomized sampling framework while the random projection method is exploited in \cite{zhang2018randomized,qi2021t}. Motivated by the limitation of initial tubal rank estimation, in this paper, we propose a new randomized fixed-precision algorithm which is independent of initial tubal rank estimation. More precisely, for a given third-order tensor and an approximation error 
bound, it retrieves the optimal tubal rank and corresponding low tubal rank approximation automatically. Our algorithm is a generalization of those proposed in (\cite{yu2018efficient, martinsson2016randomized}) from matrices to tensors. To the best of our knowledge, this is the first fixed-precision algorithm proposed for the t-SVD\footnote{After acceptance of the paper, the author found similar incremental algorithms for computation of the t-SVD in \cite{ugwu2021tensor,ugwu2021viterative}.}. To verify the efficiency of the proposed algorithm, we apply it to both synthetic and real-world datasets. 

The organization of this paper is structured as follows: The preliminary concepts and definitions used throughout the paper are introduced in Section \ref{Pre}. Section \ref{Sec:TSVD} is devoted to introducing the tensor SVD, tensor QR (t-QR) and algorithms to compute them. The proposed algorithm is presented in Section \ref{ProApp} with a detailed discussion on its properties. The computational complexity analysis is presented in Section \ref{sec:com}. Simulation results are presented in Section \ref{Sec:Sim} and a conclusion is given in Section \ref{Sec:Conclu}.

\section{Preliminaries}\label{Pre}
In this section, we present basic notations and concepts which we need throughout the paper. For simplicity of presentation, we have summarized all notations in Table \ref{Tab_no}. Tensors, matrices and vectors are denoted by underlined bold upper case letters e.g., $\underline{\bf X}$, bold upper case letters, e.g., ${\bf X}$ and bold lower case letters, e.g., ${\bf x}$, respectively. 
Slices are obtained by fixing all but two modes of a tensor. For a third-order tensor $\X$, the slices ${\X}(:,:,k),\,{\X}(:,j,:)$ and ${\X}(i,:,:)$ are called frontal, lateral and horizontal slices. Fibers are obtained by fixing all but one mode of a tensor. For a third-order tensor $\X$, ${\X}(i,j,:)$ is called a tube. The Frobenius norm of tensors is denoted by ${\left\| . \right\|_F}$. The notation ``${\rm conj}$'' denotes the complex conjugate of a complex number or the component-wise complex conjugate of a matrix. $\lceil n\rceil$ means the nearest integer number greater than or equal to $n$. Throughout the paper, we consider real tensors, however, our results can be easily generalized to complex tensors. 

One of the basic operators used in our work is concatenation along the first and second modes. A concatenation along mode 1 of tensors $\underline{\bf A}\in\mathbb{R}^{I_1\times I_2\times I_3}$ 
and $\underline{\bf B}\in\mathbb{R}^{J_1\times J_2\times J_3}$  is denoted as $\underline{\bf C} = \underline{\bf A} \boxplus_1 \underline{\bf B} \in {\mathbb{R}^{(I_1+J_1)\times I_2\times I_3}},$  where $I_2=J_2,\,I_3=J_3$ (see Figure \ref{fig1}, (a)) 
and the same definition can be stated for concatenation along the second mode\footnote{In \MATLAB the command "{\sffamily cat}" can be used for the concatenation operation.} (see Figure \ref{fig1}, (b)) \cite{cichocki2016tensor}. In the paper, sometime we use alternative notations $\A\boxplus_1 \B \equiv \begin{bmatrix}
\A\\ \B
\end{bmatrix}$ and $\A\boxplus_2 \B \equiv [\A,\B],$ for a better understanding of the developed algorithm.
\begin{figure}
\begin{center}\label{concate}
\includegraphics[width=6 cm,height=4 cm]{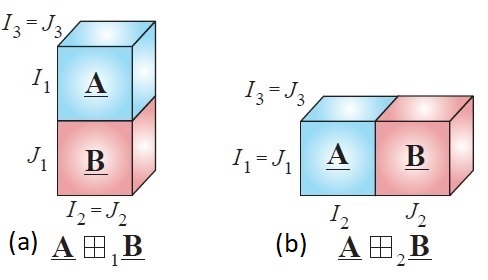}\\
\caption{\small{Concatenation of tensors along the first mode (a) and the second mode (b) \cite{cichocki2016tensor}.}}\label{fig1}
\end{center}
\end{figure}

\begin{table}
\begin{center}
\caption{Notation description}\label{Tab_no}
\begin{tabular}{ c|c  }
 Notation & Description  \\\hline 
 $\underline{\bf X},\,{\bf X},\,{\bf x}$ & A tensor, a matrix and a vector  \\\hline  
 $\|\underline{\bf X}\|_F$ & Frobenius norm of a tensor\\\hline
 $\langle{\X},{\Y}\rangle$ & Inner product of two tensors\\\hline
 $\X*\Y$ & t-product of two tensors\\\hline
 $\I$& Identity tensor \\\hline
 $\X^T$ & Transpose of a tensor\\\hline
 $\X\boxplus_1\Y$ & Concatenation of tensors along the first mode\\\hline
 $\X\boxplus_2\Y$ & Concatenation of tensors along the second mode\\\hline
 {\rm conj}({\bf X}) & Component-wise complex conjugate 
\end{tabular}
\end{center}
\end{table}

{
\begin{defn}
(Inner product) The inner product of two tensors $\underline{\bf X}\in\mathbb{R}^{I_1\times I_2\times I_3}$ and $\underline{\bf Y}\in\mathbb{R}^{I_1\times I_2\times I_3}$ is defined as 
\[
\langle
{\underline{\bf X}},{\underline{\bf Y}}
\rangle
=\sum_{i_1=1}^{I_1}
 \sum_{i_2=1}^{I_2}\sum_{i_3=1}^{I_3}
 x_{i_1i_2i_3}y_{i_1i_2i_3}.
\] It can be shown that $\langle\underline{\bf X},\underline{\bf X}\rangle=\|\X\|^2_F$.
\end{defn}
}
In order to introduce the t-SVD, we first need to present some basic definitions such as the t-product operation and f-diagonal tensors.

\begin{defn} ({t-product})
Let $\underline{\mathbf X}\in\mathbb{R}^{I_1\times I_2\times I_3}$ and $\underline{\mathbf Y}\in\mathbb{R}^{I_2\times I_4\times I_3}$, the t-product $\underline{\mathbf X}*\underline{\mathbf Y}\in\mathbb{R}^{I_1\times I_4\times I_3}$ is defined as follows
\begin{equation}\label{TPROD}
\underline{\mathbf C} = \underline{\mathbf X} * \underline{\mathbf Y} = {\rm fold}\left( {{\rm circ}\left( \underline{\mathbf X} \right){\rm unfold}\left( \underline{\mathbf Y} \right)} \right),
\end{equation}
where 
\[
{\rm circ} \left(\underline{\mathbf X}\right)
=
\begin{bmatrix}
\underline{\mathbf X}(:,:,1) & \underline{\mathbf X}(:,:,I_3) & \cdots & \underline{\mathbf X}(:,:,2)\\
\underline{\mathbf X}(:,:,2) & \underline{\mathbf X}(:,:,1) & \cdots & \underline{\mathbf X}(:,:,3)\\
 \vdots & \vdots & \ddots &  \vdots \\
 \underline{\mathbf X}(:,:,I_3) & \underline{\mathbf X}(:,:,I_3-1) & \cdots & \underline{\mathbf X}(:,:,1)
\end{bmatrix},
\]
and 
\[
{\rm unfold}(\underline{\mathbf Y})=
\begin{bmatrix}
\underline{\mathbf Y}(:,:,1)\\
\underline{\mathbf Y}(:,:,2)\\
\vdots\\
\underline{\mathbf Y}(:,:,I_3)
\end{bmatrix},\hspace*{.5cm}
\underline{\mathbf Y}={\rm fold} \left({\rm unfold}\left(\underline{\mathbf Y}\right)\right).
\]
\end{defn}

In view of \eqref{TPROD}, it is seen that the t-product operation is the block circular convolution operator and because of this, it can be easily computed through Fast Fourier Transform (FFT). More precisely, we first transform all tubes of two tensors $\underline{\bf X}$ and $\underline{\bf Y}$, into the Fourier domain denoted by $\widehat{\X}$ and $\widehat{\Y}$, then multiply frontal slices of the tensors $\widehat{\X},\,\,\widehat{\Y}$ and obtain a new tensor $\widehat{\C}$. Finally we apply the Inverse FFT (IFFT) to all tubes of the resulting tensor $\widehat{\C}$ . This procedure is summarized in Algorithm \ref{ALG:TSVDP}. Note that the \MATLAB command ${\rm fft}\left( {\underline{\mathbf X},[],3} \right)$ is equivalent to computing the FFT of all tubes of the tensor $\underline{\bf X}$.
The t-product can be defined according to an arbitrary invertible transform \cite{kernfeld2015tensor}. For example, the unitary transform matrices are utilized in  \cite{song2020robust}, instead of the discrete Fourier transform matrices. It was also proposed in \cite{jiang2020framelet} to use  non-invertible transforms instead of unitary matrices.

It is interesting to note that due to the special structure of the discrete Fourier transforms, we can reduce the $I_3$ matrix-matrix multiplications in Algorithm \ref{ALG:TSVDP} to only $\lceil \frac{I_3+1}{2}\rceil$, (see  \cite{hao2013facial,lu2019tensor} for details). To do so, we can consider the following computations instead of the ``for'' loop in Algorithm \ref{ALG:TSVDP} 
\begin{eqnarray}
\nonumber
\widehat{\underline{\mathbf C}}\left( {:,:,i} \right) &=& \widehat{\underline{\mathbf X}}\left( {:,:,i} \right)\,\widehat{\underline{\mathbf Y}}\left( {:,:,i} \right),\\
i&=&1,2,\ldots,\lceil\label{er} \frac{I_3+1}{2}\rceil,\\
\nonumber
\widehat{\underline{\mathbf C}}\left( {:,:,i} \right)&=&{\rm conj}(\widehat{\underline{\mathbf C}}\left( {:,:,I_3-i+2} \right))\\\label{er_2}
i&=&\lceil \frac{I_3+1}{2}\rceil+1,\ldots,I_3.
\end{eqnarray}
Having computed the multiplications in \eqref{er}, the rest of computations in \eqref{er_2} are not expensive. So this version of the t-product is faster than its naive implementation and we use it in all our simulations\footnote{The \MATLAB implementation of this multiplication and related tubal operations are provided in the toolbox \url{https://github.com/canyilu/Tensor-tensor-product-toolbox}}.

\RestyleAlgo{ruled}
\LinesNumbered
\begin{algorithm}
\SetKwInOut{Input}{Input}
\SetKwInOut{Output}{Output}\Input{Two data tensors $\underline{\mathbf X} \in {\mathbb{R}^{{I_1} \times {I_2} \times {I_3}}},\,\,\underline{\mathbf Y} \in {\mathbb{R}^{{I_2} \times {I_4} \times {I_3}}}$} 
\Output{t-product $\underline{\mathbf C} = \underline{\mathbf X} * \underline{\mathbf Y}\in\mathbb{R}^{I_1\times I_4\times I_3}$}
\caption{t-product in the Fourier domain \cite{kilmer2011factorization}}\label{ALG:TSVDP}
      {
      $\widehat{\underline{\mathbf X}} = {\rm fft}\left( {\underline{\mathbf X},[],3} \right)$\\
      $\widehat{\underline{\mathbf Y}} = {\rm fft}\left( {\underline{\mathbf Y},[],3} \right)$\\
\For{$i=1,2,\ldots,I_3$}
{                        
$\widehat{\underline{\mathbf C}}\left( {:,:,i} \right) = \widehat{\underline{\mathbf X}}\left( {:,:,i} \right)\,\widehat{\underline{\mathbf Y}}\left( {:,:,i} \right)$\\
}
$\underline{\mathbf C} = {\rm ifft}\left( {\widehat{\underline{\mathbf C}},[],3} \right)$   
       	}       	
\end{algorithm}

\begin{defn} ({Transpose})
The transpose of a tensor $\underline{\mathbf X}\in\mathbb{R}^{I_1\times I_2\times I_3}$ is denoted by $\underline{\mathbf X}^{T}\in\mathbb{R}^{I_2\times I_1\times I_3}$ and is produced by applying the transpose to all frontal slices of the tensor $\underline{\mathbf X}$ and reversing the order of the transposed frontal slices 2 through $I_3$.
\end{defn}

\begin{defn} ({Identity tensor})
Identity tensor $\underline{\mathbf I}\in\mathbb{R}^{I_1\times I_1\times I_3}$ is a tensor whose first frontal slice is an identity matrix of size $I_1\times I_1$ and all other frontal slices are zero. It is easy to show $\I*\X=\X$ and $\X*\I =\X$ for all tensors of conforming sizes.
\end{defn}
\begin{defn} ({Orthogonal tensor})
A tensor $\underline{\mathbf X}\in\mathbb{R}^{I_1\times I_1\times I_3}$ is orthogonal (under t-product operator) if ${\underline{\mathbf X}^T} * \underline{\mathbf X} = \underline{\mathbf X} * {\underline{\mathbf X}^ T} = \underline{\mathbf I}$.
\end{defn}

\begin{defn} ({f-diagonal tensor})
If all frontal slices of a tensor are diagonal then the tensor is called f-diagonal.
\end{defn}

\begin{defn}
(Random tensor) A tensor $\underline{\bf \Omega}$ is random if its first frontal slice $\underline{\bf \Omega}(:,:,1)$ has independent and identically distributed (i.i.d) elements while the other frontal slices are zero.
\end{defn}

\begin{defn}\label{Def1}
Let $\mathcal{V}_1$ and $\mathcal{V}_2$ be two vector spaces with corresponding  inner products ${\left\langle . \right\rangle_{\mathcal{V}_1}}$ and ${\left\langle . \right\rangle_{\mathcal{V}_2}}$ respectively and $\mathcal{L}:{\mathcal{V}_1} \to {\mathcal{V}_2}$ be a linear operator. The adjoint of the operator $\mathcal{L}$ is denoted by ${\mathcal{L}}^{adj}$ and satisfies the following relation 
\[
\left\langle {\mathcal{L}\left( \underline{\bf X} \right),\underline{\bf Y}} \right\rangle_{\mathcal{V}_2} = \left\langle {\underline{\bf X},{\mathcal{L}^{adj}}\left( \underline{\bf Y} \right)} \right\rangle_{\mathcal{V}_1}.
\]
\end{defn}

\section{Tensor SVD (t-SVD) and Tensor QR (t-QR) decomposition}\label{Sec:TSVD}
Tensor SVD (t-SVD) is a special type of tensor decomposition representing a third-order tensor as a product of three third-order tensors where the middle tensor is f-diagonal (\cite{kilmer2011factorization,kilmer2013third,braman2010third,gleich2013power}), see Figure \ref{Pic7}, for a graphical illustration of the t-SVD and its truncated version. The generalization of the t-SVD to higher order tensors is given in \cite{martin2013order}. Throughout this paper for the t-SVD, we only focus on third-order tensors, though the extension of our approach to tensors of order higher higher than 3 is straightforward. 

\begin{figure}
\begin{center}
\includegraphics[width=9 cm,height=4.5 cm]{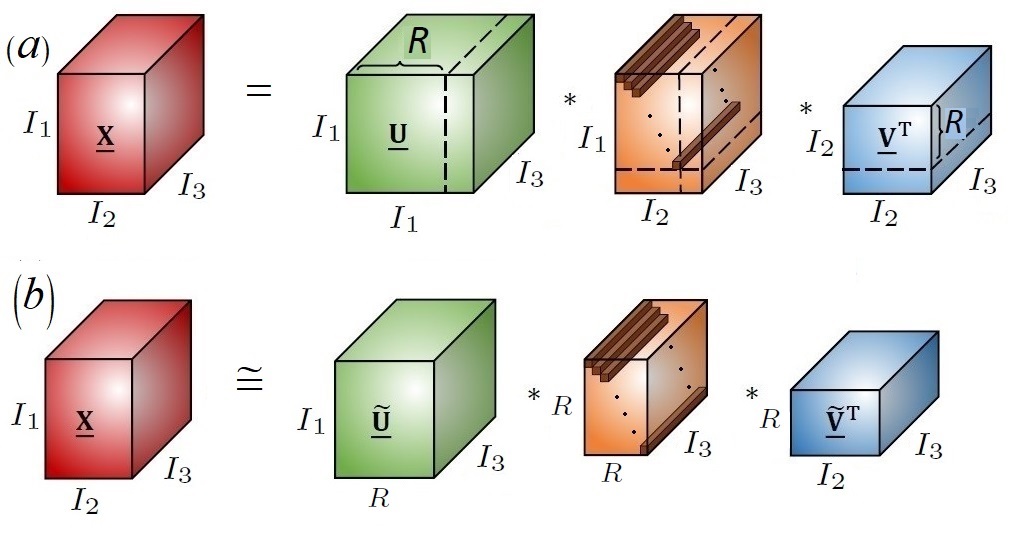}\\
\caption{\small{Illustration of ({\bf a}) Tensor SVD (t-SVD) and ({\bf b}) truncated t-SVD for a third-order tensor.}}\label{Pic7}
\end{center}
\end{figure}

Let $\underline{\bf X}\in\mathbb{R}^{I_1\times I_2\times I_3}$, then the t-SVD of the tensor $\underline{\bf X},$ admits the model 
\begin{eqnarray}\label{tmodel}
\underline{\bf X} \approx \underline{\bf U} * \underline{\bf S}* {\underline{\bf V}^T},
\end{eqnarray} 
where $\underline{\bf U}\in\mathbb{R}^{I_1\times R\times I_3}$, $\underline{\bf V}\in\mathbb{R}^{R\times I_2\times I_3}$ are orthogonal tensors and the tensor $\underline{\bf S}\in\mathbb{R}^{R\times R\times I_3}$ is f-diagonal (\cite{kilmer2011factorization,kilmer2013third}). This is also called the truncated t-SVD.

The number of nonzero tubes of the diagonal tensor $\underline{\bf S}$ is called the tubal rank. Unlike the Tucker decomposition or CPD, the truncated t-SVD gives the best approximation in the least-squares sense for any unitary invariant tensor norm \cite{kilmer2011factorization}. 

Computation of the t-SVD is performed in the Fourier domain and this is summarized in Algorithm \ref{ALG:t-SVD}. Here, similar to the t-product, it is sufficient to consider only the truncated SVD of the first $\lceil\frac{I_3+1}{2}\rceil$ frontal slices $\widehat{\X}(:,:,i),\,\,i=1,2,\ldots,\lceil\frac{I_3+1}{2}\rceil$, (see \cite{lu2019tensor}, for details). More precisely, we first compute the truncated SVD of the first $\lceil\frac{I_3+1}{2}\rceil$ frontal slices ${\widehat{\underline{\mathbf X}}(:,:,i)},\,i=1,2,\ldots,\lceil\frac{I_3+1}{2}\rceil$ as follows
\begin{eqnarray}\label{TRI}
\nonumber
[\widehat{\U}(:,:,i),\widehat{\tS}(:,:,i),\widehat{\V}(:,:,i)]&=& \\
\nonumber&&\hspace{-2cm}
{\rm truncated\_ svd}\left( {\widehat{\underline{\mathbf X}}(:,:,i),R} \right)\\&&\hspace{-2cm}
 i=1,2,\ldots,\lceil\frac{I_3+1}{2}\rceil,
\end{eqnarray}
and store $\widehat{\U}(:,:,i),\,\widehat{\V}(:,:,i)$ and $\widehat{\tS}(:,:,i),$ for $i=1,2,\ldots,\lceil\frac{I_3+1}{2}\rceil$. Then, they are used to recover the remaining factors $\widehat{\U}(:,:,i),\,\widehat{\tS}(:,:,i),\,\,\widehat{\V}(:,:,i),\,i=\lceil \frac{I_3+1}{2}\rceil+1,\ldots,I_3$ based on the following relations 
\begin{eqnarray}\label{TRI_2}
\nonumber
\widehat{\U}(:,:,i)&=&{\rm conj}(\widehat{\U}(:,:,I_3-i+2)),\\
\nonumber
 \widehat{\tS}(:,:,i)&=&\widehat{\tS}(:,:,I_3-i+2),\\
 \nonumber
\widehat{\V}(:,:,i)&=&{\rm conj}(\widehat{\V}(:,:,I_3-i+2)),\\
 &&i=\lceil \frac{I_3+1}{2}\rceil+1,\ldots,I_3.
\end{eqnarray}
Similar to the t-product, instead of the discrete Fourier transform matrices, one can define the t-SVD according to an arbitrary unitary transform matrix. It is shown in \cite{jiang2020framelet} that this can provide t-SVD with a lower tubal rank. 

\RestyleAlgo{ruled}
\LinesNumbered
\begin{algorithm}
\SetKwInOut{Input}{Input}
\SetKwInOut{Output}{Output}\Input{A data tensor $\underline{\mathbf X} \in {\mathbb{R}^{{I_1} \times {I_2} \times {I_3}}}$ and a target tubal rank $R$;} 
\Output{${\underline{\mathbf U}_R} \in {\mathbb{R}^{{I_1} \times R \times {I_3}}},\,\,{\underline{\mathbf S}_R} \in {\mathbb{R}^{R \times R \times {I_3}}},$ ${\underline{\mathbf V}_R} \in {\mathbb{R}^{{I_2} \times R \times {I_3}}}$;}
\caption{Truncated t-SVD}\label{ALG:t-SVD}
      {
$\widehat{\underline{\mathbf X}}= {\rm fft}\left( {\underline{\mathbf X},[],3} \right)$;\\
\For{$i=1,2,\ldots,I_3$}
{                        
$[{\mathbf U},{\mathbf S},{\mathbf V}] = {\rm truncated\_ svd}\left( {\widehat{\underline{\mathbf X}}(:,:,i),R} \right)$;\\
${\widehat{\underline{\mathbf U}}}\left( {:,:,i} \right) = {{\mathbf U}}$;\\
${\widehat{\underline{\mathbf S}}}\left( {:,:,i} \right) = {{\mathbf S}}$;\\ 
${\widehat{\underline{\mathbf V}}}\left( {:,:,i} \right) = {{\mathbf V}}$;\\
}
${\underline{\mathbf U}}_R = {\rm ifft}\left( {{\widehat{\underline{\mathbf U}}},[],3} \right),\,{\underline{\mathbf S}}_R = {\rm ifft}\left( {{\widehat{\underline{\mathbf S}}},[],3} \right),$\\
${\underline{\mathbf V}}_R = {\rm ifft}\left( {\widehat{\underline{\mathbf V}},[],3} \right).$  
       	}       	
\end{algorithm} 

The t-QR decomposition can be analogously defined based on the t-product. Here, for the t-QR decomposition of a tensor $\underline{\mathbf X}\in\mathbb{R}^{I_1\times I_2\times I_3},\,$ i.e., $\underline{\mathbf X} = \underline{\mathbf Q} * \underline{\mathbf R}$, we first compute the FFT of the tensor $\underline{\mathbf X}$ as 
\begin{equation}\label{fftten}
\underline{\widehat{\mathbf X}}={\rm fft}({\underline{\mathbf X}},[],3),
\end{equation}
and then the QR decomposition of all frontal slices of the tensor $\underline{\widehat{\mathbf X}}$ are computed as follows
\[
\underline{\widehat{\mathbf X}}(:,:,i)=\underline{\widehat{\mathbf Q}}(:,:,i)\,\,\underline{\widehat{\mathbf R}}(:,:,i),\,\,i=1,2,\ldots,I_3.
\]
Finally the IFFT operator is applied to the tensors $\underline{\widehat{\mathbf Q}}$ and $\underline{\widehat{\mathbf R}}$, to compute the tensors $\underline{{\mathbf Q}}$ and $\underline{{\mathbf R}}$. Similar to the matrix case, a tensor can be orthogonalized by applying the t-QR decomposition and taking the $\Q$ part. We use the notation ${\rm orth}(\X)$ to denote this operation. The tensor SVD and tensor QR decomposition with $I_3=1$ are reduced to the classical matrix SVD and QR decomposition. So, in this special case, ${\rm orth}({\bf X})$ gives the orthogonal part of the matrix QR decomposition. In \MATLAB, this is equivalent to $[{\bf Q},\sim]={\bf qr}({\bf X},0)$).

For large-scale tensors, Algorithm \ref{ALG:t-SVD} is expensive even if we use formulations \eqref{TRI} and \eqref{TRI_2}, because of the computation of a sequence of SVDs. To tackle this difficulty, the random projection technique can be used \cite{zhang2018randomized,qi2021t}. {To do so, in the first stage, we make reduction on the data tensor $\underline{\bf X}$ as $\underline{\bf Y} = \underline{\bf X}*\underline{\bf \Omega}$ where ${\bf\Omega} \in {\mathbb{R}^{{I_2} \times {(R+P)} \times {I_3}}}$ is a random tensor. Then, the t-QR decomposition of the tensor $\underline{\bf Y}$ is computed to get a low tubal rank approximation of the tensor $\X$ as follows
\begin{equation}\label{ltrank}
\underline{\bf X} \cong \underline{\bf Q}*\underline{\bf B},
\end{equation}
where $\underline{\bf B} = {\underline{\bf Q}^T}*\underline{\bf X},\,\,\,\underline{\bf Q} \in {\mathbb{R}^{{I_1} \times (R+P) \times {I_3}}},$ and $\underline{\bf B} \in {\mathbb{R}^{(R+P) \times {I_2} \times {I_3}}}$. Note $R+P\ll{I_2}$, where $R$ is an estimation of the tubal-rank and $P$ is the {\it oversampling} parameter for a better capturing the range of the tensor $\X$ \cite{zhang2018randomized}. Having computed the low tubal-rank approximation \eqref{ltrank}, and the t-SVD of $\B$ as $\underline{\bf B} = \widehat{\underline{\bf U}}*\underline{\bf S}*{\underline{\bf V}^T},$ the t-SVD of $\underline{\bf X}$ can be recovered as $\underline{\bf X} = \left( {\underline{\bf Q}*\widehat{\underline{\bf U}}} \right) * \underline{\bf S} * {\underline{\bf V}^T}.$ Note that the tensor $\B$ has smaller size than the original data tensor $\X$ and it is easier to handle. If we apply other decompositions such as t-QR or tensor CUR \cite{ahmadi2021cross,tarzanagh2018fast} on the compressed tensor $\B$, they are called randomized t-QR or randomized tensor CUR algorithms.}

A closely related concept is the {\it power iteration} technique which is used in the scenarios that the frontal slices of the underlying data tensor $\underline{\bf X}$ do not have fast decaying singular values. In this situation, instead of the data tensor $\underline{\bf X}$, the new tensor $\underline{\bf Y} = {\left( {\underline{\bf X} * {\underline{\bf X}^T}} \right)^q} * \underline{\bf X}$ is multiplied with a random tensor. By straightforward computations and substituting the t-SVD of $\underline{\bf X}$  \eqref{tmodel} in $\underline{\bf Y}$, we have $\underline{\bf Y} = \underline{\bf U} * {\underline{\bf S}^{2q + 1}} * {\underline{\bf V}^T}$, where ${\underline{\bf S}^{2q + 1}}=\underbrace{{\tS}*{\tS}*\cdots *{\tS}}_{2q+1\,\,{\rm times}}$. It is seen that the left and right tensor parts of the t-SVD of $\underline{\bf X}$ and $\underline{\bf Y}$ are the same while the middle tensor of $\underline{\bf Y}$, e.g. $\underline{\bf S}^{2q + 1}$ has frontal slices with faster decaying singular values than the tensor ${\tS}$. For the stability issues, we should avoid the direction computation $\underline{\bf Y} = {\left( {\underline{\bf X} * {\underline{\bf X}^T}} \right)^q} * \underline{\bf X}$ and this should be performed sequentially using the subspace iteration method \cite{zhang2018randomized}, (we will use this approach in Section \ref{ProApp}). Larger $P$ and $q$ lead to better accuracy while they need higher computational costs. In practice, the power iteration $q=1,2,3$ and oversampling $P=10$ are often sufficient to provide satisfying results \cite{halko2011finding} .

The sampling approach can also be used for low tubal rank approximation besides the random projection. Indeed, a randomized slice sampling algorithm was proposed in \cite{tarzanagh2018fast} in which horizontal and lateral slices are selected and a low tubal rank approximation is computed based on them, see Figure \ref{Pic2} for a graphical illustration on this approach. The work \cite{qi2021t}, generalized the pass-efficient randomized algorithms proposed in \cite{tropp2017practical} to tensors. These algorithms are very efficient when the number of passes over the data tensor is our main concern. For example in situations that the underlying data tensor is stored out of core and the communication cost may exceed our main computations. 
All of these algorithms need an estimation of the tubal rank and for a given error bound, they can not find an optimal tubal rank automatically.

\begin{figure}
\begin{center}
\includegraphics[width=8.75 cm,height=3.75 cm]{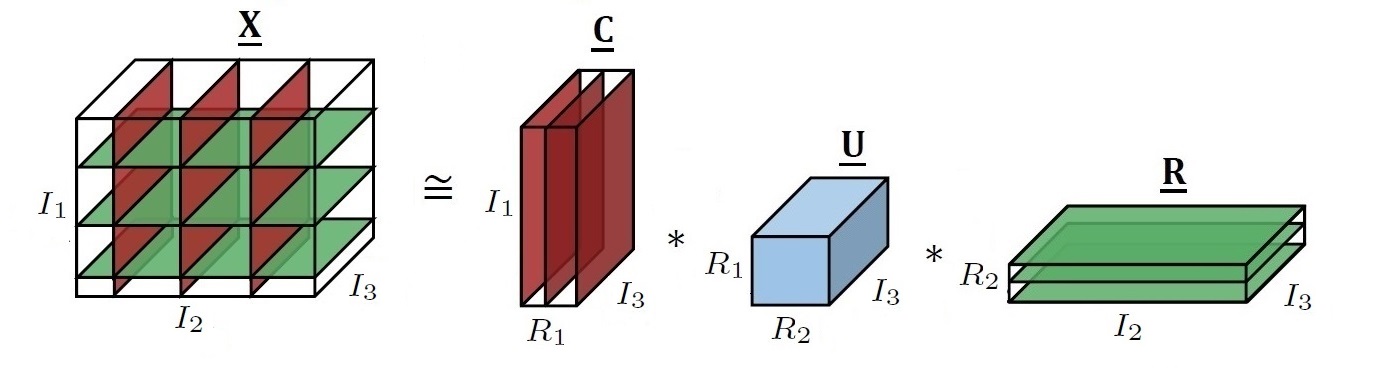}\\
\caption{\small{Randomized t-SVD based on slice sampling \cite{tarzanagh2018fast,ahmadi2021cross}.}}\label{Pic2}
\end{center}
\end{figure}

We now derive the adjoint of the operator $\mathcal{L}\left( \underline{\bf X} \right) =\underline{\bf Q} * \underline{\bf X}$ which we will need it in our analysis.

\begin{lem}\label{Lem1}
For the linear operator defined as follow
\[
\begin{array}{l}
\mathcal{L}:{\mathcal{V}} \to {\mathcal{V}}\\
\\
\mathcal{L}\left( \underline{\bf X} \right) = \underline{\bf Q} * \underline{\bf X},
\end{array}
\]
where $\mathcal{V}$ is the space of third-order tensors,
the adjoint of $\mathcal{L}$ is ${\mathcal{L}^{adj}}\left( \underline{\bf X} \right)=\underline{\bf Q}^T*\underline{\bf X}.$ 
\end{lem}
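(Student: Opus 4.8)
The plan is to verify directly that the candidate operator $\mathcal{L}^{adj}(\X)=\Q^{T}*\X$ satisfies the defining relation of Definition~\ref{Def1}, namely
\[
\langle \Q*\X,\,\Y\rangle \;=\; \langle \X,\,\Q^{T}*\Y\rangle
\]
for all third-order tensors $\X,\Y$ of conforming sizes; since the adjoint is unique whenever it exists, this finishes the proof. The first step is to pass to the block-circulant picture of the t-product: by \eqref{TPROD} we have ${\rm unfold}(\Q*\X)={\rm circ}(\Q)\,{\rm unfold}(\X)$, and because ${\rm unfold}$ merely stacks the frontal slices it preserves the Frobenius inner product, so $\langle \underline{\bf A},\underline{\bf B}\rangle=\langle {\rm unfold}(\underline{\bf A}),{\rm unfold}(\underline{\bf B})\rangle$ for tensors of equal size (with the ordinary real matrix inner product on the right). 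Combining these with the elementary matrix identity $\langle M U,V\rangle=\langle U,M^{T}V\rangle$ gives
\[
\langle \Q*\X,\Y\rangle=\langle {\rm circ}(\Q)\,{\rm unfold}(\X),\,{\rm unfold}(\Y)\rangle=\langle {\rm unfold}(\X),\,{\rm circ}(\Q)^{T}\,{\rm unfold}(\Y)\rangle .
\]

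The crux is then the identity ${\rm circ}(\Q)^{T}={\rm circ}(\Q^{T})$, which I would prove by a block-index computation. The $(i,j)$ block of ${\rm circ}(\Q)$ is $\Q(:,:,((i-j)\bmod I_3)+1)$, so the $(i,j)$ block of ${\rm circ}(\Q)^{T}$ is $\Q(:,:,((j-i)\bmod I_3)+1)^{T}$. On the other hand, the definition of the tensor transpose gives $\Q^{T}(:,:,1)=\Q(:,:,1)^{T}$ and $\Q^{T}(:,:,k)=\Q(:,:,I_3-k+2)^{T}$ for $k=2,\dots,I_3$; substituting $k=((i-j)\bmod I_3)+1$ into the $(i,j)$ block of ${\rm circ}(\Q^{T})$ and using $I_3-k+2\equiv j-i \pmod{I_3}$ produces exactly the same matrix (the diagonal blocks, where $i\equiv j$ and $k=1$, are checked separately). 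Granted ${\rm circ}(\Q)^{T}={\rm circ}(\Q^{T})$, we get ${\rm circ}(\Q)^{T}\,{\rm unfold}(\Y)={\rm unfold}(\Q^{T}*\Y)$, and folding back yields $\langle \Q*\X,\Y\rangle=\langle \X,\Q^{T}*\Y\rangle$, as required.

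I expect the block-index bookkeeping behind ${\rm circ}(\Q)^{T}={\rm circ}(\Q^{T})$ to be the only genuine obstacle: one must check that the reversal of the frontal slices $2,\dots,I_3$ built into the tensor transpose is precisely what compensates the reversal of cyclic shifts caused by transposing a block circulant matrix, and the slice-$1$ (diagonal) blocks must be treated on their own. As a cross-check I would also record the equivalent Fourier-domain argument: by Parseval's relation $\langle \underline{\bf A},\underline{\bf B}\rangle=\frac{1}{I_3}\sum_{i=1}^{I_3}\langle \widehat{\underline{\bf A}}(:,:,i),\widehat{\underline{\bf B}}(:,:,i)\rangle$ (complex matrix inner product, the total sum being real because the tensors are real), the t-product acts slicewise as matrix multiplication and the tensor transpose acts slicewise as the conjugate transpose, so the statement collapses to the slicewise matrix fact $\langle \widehat{\Q}(:,:,i)\,\widehat{\X}(:,:,i),\widehat{\Y}(:,:,i)\rangle=\langle \widehat{\X}(:,:,i),\widehat{\Q}(:,:,i)^{H}\widehat{\Y}(:,:,i)\rangle$; the only extra care there is with the complex conjugates, which is why I would keep the block-circulant route as the primary argument.
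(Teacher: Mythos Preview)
Your proposal is correct and follows essentially the same route as the paper: pass to the block-circulant/unfold picture, use the matrix adjoint identity $\langle MU,V\rangle=\langle U,M^{T}V\rangle$, and invoke ${\rm circ}(\Q)^{T}={\rm circ}(\Q^{T})$ to conclude. The paper simply asserts the latter identity without justification, whereas you additionally sketch its block-index verification and a Fourier-domain cross-check, so your argument is strictly more detailed than the original.
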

\begin{proof}
From the following straightforward computations, we have
\begin{eqnarray}\label{Pr1}
\nonumber
\left\langle {\underline{\bf Q} * \underline{\bf X},\underline{\bf Y}} \right\rangle  &=& \left\langle {{\rm circ}\left( \underline{\bf Q} \right){\rm unfold}\left( \underline{\bf X} \right),{\rm unfold}\left( \underline{\bf Y} \right)} \right\rangle \\
\nonumber
 &=& \left\langle {{\rm unfold}\left( \underline{\bf X} \right),{\rm circ}{{\left( \underline{\bf Q} \right)}^T}{\rm unfold}\left( \underline{\bf Y} \right)} \right\rangle \\
 &=& \left\langle {\underline{\bf X},{\underline{\bf Q}^T} * \underline{\bf Y}} \right\rangle.
\end{eqnarray}
In deriving \eqref{Pr1}, note ${\rm circ}\left( {{\underline{\bf Q}^T}} \right) = {\left( {{\rm circ}\left( \underline{\bf Q} \right)} \right)^T}.$ This completes the proof.
\end{proof}

Using Lemma \eqref{Lem1}, it can be easily proved that multiplying a tensor with an orthogonal tensor does not change its Frobenius norm because
\begin{eqnarray*}
\left\| {\underline{\bf Q} * \underline{\bf B}} \right\|_F^2 = \left\langle {\underline{\bf Q} * \underline{\bf B},\underline{\bf Q} * \underline{\bf B}} \right\rangle  = \left\langle {{\underline{\bf Q}^T} * \underline{\bf Q} * \underline{\bf B},\underline{\bf B}} \right\rangle  = \left\| \underline{\bf B} \right\|_F^2.
\end{eqnarray*}

\section{Proposed approach}\label{ProApp}
As discussed earlier, the randomized algorithms proposed in (\cite{zhang2018randomized,tarzanagh2018fast,qi2021t,ding2022randomized}) require an estimation of the tubal rank which may be a difficult task. To solve this limitation, we propose a new randomized fixed-precision or adaptive algorithm which for a given approximation error bound, it can find an optimal tubal rank of the underlying data tensor and corresponding low tubal rank approximation, automatically. {To be more precise, let $\X\in\mathbb{R}^{I_1\times I_2\times I_3}$ be a given data tensor and $\epsilon$ be a given error bound. We aim at finding a tubal rank $R$ for which we have
\begin{equation}\label{Err_1}
\|\X-\Q*\B\|\leq \epsilon,
\end{equation}
where $\Q\in\mathbb{R}^{I_1\times R\times I_3}$ and $\B=\Q^T*\X\in\mathbb{R}^{R\times I_2\times I_3}$. Then the t-SVD of the tensor $\X$  can be recovered from computing the truncated t-SVD of the smaller tensor $\B$ as discussed in Section \ref{Sec:TSVD}.}

{
Our algorithm is a generalization of the randomized fixed-precision algorithm  developed in \cite{yu2018efficient} which is a modification and improved version of the fixed-precision algorithm proposed in \cite{martinsson2016randomized}. Similar to the idea presented in \cite{yu2018efficient}, we propose to gradually increase the size of the second and first modes of $\Q$ and $\B$, respectively, until the condition \eqref{Err_1} is satisfied. In terms of the terminologies introduced in Section \ref{Pre}, this means that the new block tensors are concatenated along the second and the first modes of the current blocks $\Q$ and $\B$, respectively. For a better understanding of the proposed algorithm, let us first start with the randomized blockQB algorithm developed in \cite{martinsson2016randomized} for low rank matrix approximation and summarized in Algorithm \ref{ALgRRM}. For a given matrix ${\bf X}\in\mathbb{R}^{I\times J}$ and a target rank $R$, the random projection method, provides a low rank approximation ${\bf X}\cong{\bf Q}{\bf B}$ as follows
\begin{eqnarray}\label{R_P}
\nonumber
{\bf \Omega }&=&{\rm randn}(J,R),\\
\nonumber
{\bf Q} &=& {\rm orth}({\bf X}{\bf\Omega}),\\
{\bf B} &=& {\bf Q}^T{\bf X}.
\end{eqnarray}
The block form of procedure \eqref{R_P} improves computational efficiency, simplifies implementation on parallel machines and makes it possible to use the BLAS-3 operations. Besides, it facilitates the adaptive rank determination (\cite{martinsson2016randomized,halko2011finding}).
Let $R=bs$ where $b$ is the block size parameter and $s$ is the number of such blocks to build matrices. In practice, $b$ is selected much smaller than $R$ while this depends strongly on the underlying hardware \cite{yu2018efficient}.}
{
Then, consider the block versions of the random matrix ${\bf \Omega}$ and the matrix ${\bf B}$ as ${\bf \Omega}=[{\bf \Omega}^{(1)},{\bf \Omega}^{(2)},\ldots,{\bf \Omega}^{(s)}]$ and ${\bf B}=[{\bf B}^{(1)\,T},{\bf B}^{(2)\,T},\ldots,{\bf B}^{(s)\,T}]^T$, where ${\bf \Omega}^{(i)}\in\mathbb{R}^{J\times b}$ and ${\bf B}^{(i)}\in\mathbb{R}^{b\times J}$ for $i=1,2,\ldots,s.$ It was proved in \cite{martinsson2016randomized} that the following iterative procedures 
\begin{eqnarray}
\nonumber
{\bf Q}^{(i)}&=&{\rm orth}{\left({\bf X}^{(i-1)}{\bf \Omega}^{(i)}\right)},\\
\nonumber
{\bf B}^{(i)}&=&{\bf Q}^{(i)\,T}{{\bf X}^{(i-1)}},\\
{{\bf X}^{(i)}}&=&{{\bf X}^{(i-1)}}-{\bf Q}^{(i)}{\bf B}^{(i)},
\end{eqnarray} 
are equivalent to \eqref{R_P}  where ${\bf X}^{(0)}={\bf X}$ and $i=1,2,\ldots,s,$ but in the block format. This blocking strategy helps to develop an adaptive algorithm for low rank approximation because, at each iteration, we can add new blocks to the matrices ${\bf Q}$ and ${\bf B}$ and check whether the stopping criterion $\|{\bf X}-{\bf Q}{\bf B}\|\leq \epsilon$ is satisfied or not. So, we can adaptively estimate the rank.}

Algorithm \ref{ALgRRM}, is the implementation of the above-mentioned procedure which is also equipped with the power iteration (Lines 4-7) for the situations that the singular values do not decay very fast.  

The authors in \cite{yu2018efficient} proposed a more efficient version of Algorithm \ref{ALgRRM} in the sense of computing the error norm at each iteration. Our proposed Algorithm \ref{ALgRR} is a generalization of this algorithm to the tensor case. 
 It requires only an approximation error bound $\epsilon$, a block size $b$ and a power iteration parameter $q$ as inputs. Also the operator ``orth'' in Lines 5, 7, 8 and 10, gives the orthogonal part of the t-QR decomposition for a given data tensor. {Please note that in the sequel, the notation $\B^{(i)},$ denotes a block tensor constructed by composing $i$ tensors $\B^{(k)},\,k=1,2,\ldots,i$, i.e., $\underline{\bf B}_i=\begin{bmatrix}
\B^{(1)}\\
\B^{(2)}\\
\vdots\\
\B^{(i)}
\end{bmatrix}.$ }

\RestyleAlgo{ruled}
\begin{algorithm}
\LinesNumbered
\SetKwInOut{Input}{Input}
\SetKwInOut{Output}{Output}  \Input{A matrix ${\bf X}\in\mathbb{R}^{I_1\times I_2}$; an error bound $\epsilon$; a block size $b$.} \Output{${\bf Q}=[{\bf Q}^{(1)},\ldots,{\bf Q}^{(i)}],\,{\bf B}=[{\bf B}^{(1)\,T},\ldots,{\bf B}^{(i)\,T}]^T$ such that ${\left\| {{\bf X} - {\bf Q}{\bf B}} \right\|_F} < \varepsilon$ and corresponding optimal matrix rank $R$}
\caption{The blocked randQB algorithm \cite{martinsson2016randomized}}\label{ALgRRM}
\For{$i=1,2,3,\ldots$}{
${{\bf\Omega}^{(i)}} = {\rm randn}\left( {I_2,b} \right)$;\\
${{\bf Q}^{(i)}} = {\rm orth}\left( {{\bf X}{{\bf \Omega}^{(i)}}} \right)$;\\
\For{$j=1,2,\ldots,q$}{
${\bf Q}^{(i)}={\rm orth}({\bf X}^T{\bf Q}^{(i)})$;\\
${\bf Q}^{(i)}={\rm orth}({\bf X}{\bf Q}^{(i)})$;\\
}
${{\bf Q}^{(i)}} = {\rm orth}\left( {{\bf Q}^{(i)}}-\sum_{j=1}^{i-1}{\bf Q}^{(j)}{\bf Q}^{(j)\,T}{\bf Q}^{(i)}\right)$;\\
${{\bf B}^{(i)}} = {\bf Q}^{(i)T}{\bf X}$;\\
${\bf X}={\bf X}-{\bf Q}^{(i)}{\bf B}^{(i)}$;\\
$
{\bf Q}=[{\bf Q}^{(1)},\ldots,{\bf Q}^{(i)}]$;\\
${\bf B}=[{\bf B}^{(1)\,T},\ldots,{\bf B}^{(i)\,T}]^T$;\\
if {$\|{\bf X}\|_F\leq \epsilon$}
{then stop}
}
\end{algorithm} 

\RestyleAlgo{ruled}
\begin{algorithm}
\LinesNumbered
\SetKwInOut{Input}{Input}
\SetKwInOut{Output}{Output}  \Input{A tensor $\underline{\bf X}\in\mathbb{R}^{I_1\times I_2\times I_3}$; an error bound $\epsilon$; a block size $b$ and a power iteration $q$.}  \Output{$\underline{\bf Q}=[\Q^{(1)},\Q^{(2)},\ldots,\Q^{(i)}],\,\underline{\bf B}=\begin{bmatrix}
\B^{(1)}\\
\B^{(2)}\\
\vdots\\
\B^{(i)}
\end{bmatrix}
$ such that ${\left\| {\underline{\bf X} - \underline{\bf Q}*\underline{\bf B}} \right\|_F} < \varepsilon$ and corresponding optimal tubal rank $R$}
$\underline{\bf Q}=[],\,\,\underline{\bf B}=[]$;\\
\caption{Proposed randomized fixed-precision algorithm}\label{ALgRR}
$E = \left\| \underline{\bf X} \right\|_F^2$;\\
\For{$i=1,2,\ldots$}{
${\underline{\bf\Omega}^{(i)}} = {\rm randn}\left( {I_2,b,I_3} \right)$;\\
${\underline{\bf Q}^{(i)}} = {\rm orth}\left( {\underline{\bf X}*{\underline{\bf \Omega}^{(i)}} - \underline{\bf Q}*\left( {\underline{\bf B}*{\underline{\bf\Omega}^{(i)}}} \right)} \right)$;\\
\For{$j=1,2,\ldots, q$}
{${\underline{\bf Q}^{(i)}} = {\rm orth}\left( {{\underline{\bf X}}^T*{\underline{\bf Q}^{(i)}}} \right)$;\\
${\underline{\bf Q}^{(i)}} = {\rm orth}\left( {\underline{\bf X}*{\underline{\bf Q}^{(i)}}} \right)$
}
${\underline{\bf Q}^{(i)}} = {\rm orth}\left( {{\underline{\bf Q}^{(i)}} - \underline{\bf Q}*\left( {{\underline{\bf Q}^T}*{\underline{\bf Q}^{(i)}}} \right)} \right)$;\\
${\underline{\bf B}^{(i)}} = \underline{\bf Q}^{(i)T}*\underline{\bf X}$;\\
$
\underline{\bf Q}=\underline{\bf Q} \boxplus_2{\underline{\bf Q}^{(i)}}
$\\
$\underline{\bf B}=\underline{\bf B} \boxplus_1 {\underline{\bf B}^{(i)}}
$\\
$E = E - \left\| {{\underline{\bf B}^{(i)}}} \right\|_F^2$;\\
\If{$E < {\varepsilon ^2}$}{Break}
       	}
\end{algorithm}

\RestyleAlgo{ruled}
\begin{algorithm}
\caption{Precise tubal rank selection (Lines 15-17 of Algorithm \ref{ALgRR})}\label{Rank_sel}
\If{$\left(E - \left\| {{\underline{\bf B}^{(i)}}} \right\|_F^2<\epsilon^2\right)$}{
\For{$j=1,2,\ldots,b$}
{$E = E - \left\| {{\underline{\bf B}^{(i)}}}(j,:,:) \right\|_F^2$;\\
\If{$E<\epsilon^2$}
{Remove ${{\underline{\bf B}^{(i)}}}(j+1:b,:,:)$ from 
${{\underline{\bf B}^{(i)}}}$ and remove 
${{\underline{\bf Q}^{(i)}}}(:,j+1:b,:)$ from ${{\underline{\bf Q}^{(i)}}}$
;\\
{\bf Break}
}
}
\eIf{$j<b$}
 {Set the tubal rank as $(i-1)b+j$}
 {Set the tubal rank as $ib$}
 }
\end{algorithm} 

Inspired by \cite{yu2018efficient}, in the next theorem, we present an elegant way for computing the residual error term.
\begin{thm}\label{ThmRU}
Let $\underline{\bf X}\in\mathbb{R}^{I_1\times I_2\times I_3}$ be a given tensor, $\underline{\bf Q}\in\mathbb{R}^{I_1\times R\times I_3}$ be an orthogonal tensor $(R<I_2),$ and $\underline{\bf B} = {\underline{\bf Q}^T}*\underline{\bf X}\in\mathbb{R}^{{R \times {I_2} \times {I_3}}}$ then
\[
\left\| {\underline{\bf X} - \underline{\bf Q}* \underline{\bf B}} \right\|_F^2 = \left\| \underline{\bf X} \right\|_F^2 - \left\| \underline{\bf B} \right\|_F^2.
\]
\end{thm}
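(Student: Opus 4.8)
The plan is to expand the squared Frobenius norm $\|\X - \Q*\B\|_F^2$ using the inner product on tensors, exactly as one does for the matrix Pythagorean identity, and then exploit the orthogonality of $\Q$ together with the definition $\B = \Q^T*\X$. First I would write
\[
\left\| \X - \Q*\B \right\|_F^2 = \left\langle \X - \Q*\B,\ \X - \Q*\B \right\rangle
= \|\X\|_F^2 - 2\left\langle \X,\ \Q*\B \right\rangle + \|\Q*\B\|_F^2,
\]
using bilinearity of the inner product and the identity $\langle \underline{\bf Z},\underline{\bf Z}\rangle = \|\underline{\bf Z}\|_F^2$ from the Definition of inner product.

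Next I would handle the two non-trivial terms. For the cross term, apply Lemma \ref{Lem1}: since $\mathcal{L}(\underline{\bf Z}) = \Q*\underline{\bf Z}$ has adjoint $\mathcal{L}^{adj}(\underline{\bf Z}) = \Q^T*\underline{\bf Z}$, we get $\langle \X, \Q*\B\rangle = \langle \Q^T*\X, \B\rangle = \langle \B, \B\rangle = \|\B\|_F^2$, where the middle equality is just the definition $\B = \Q^T*\X$. For the last term, I would use the already-established fact (stated right after Lemma \ref{Lem1}) that multiplication by an orthogonal tensor preserves the Frobenius norm, giving $\|\Q*\B\|_F^2 = \|\B\|_F^2$; alternatively one derives it directly via $\langle \Q*\B,\Q*\B\rangle = \langle \Q^T*\Q*\B,\B\rangle = \langle \B,\B\rangle$ using Lemma \ref{Lem1} and orthogonality $\Q^T*\Q = \I$.

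Substituting both evaluations into the expansion yields
\[
\left\| \X - \Q*\B \right\|_F^2 = \|\X\|_F^2 - 2\|\B\|_F^2 + \|\B\|_F^2 = \|\X\|_F^2 - \|\B\|_F^2,
\]
which is the claim.

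There is no real obstacle here: the only subtlety is making sure the adjoint relation from Lemma \ref{Lem1} is applied in the correct direction (moving $\Q$ off of $\B$ and onto $\X$ as $\Q^T$), and that the orthogonality hypothesis on $\Q$ is genuinely used — note the theorem only needs $\Q^T*\Q = \I$, which holds for the "tall" orthogonal tensors produced by \texttt{orth} in Algorithm \ref{ALgRR}, not the full two-sided orthogonality. I would also remark in passing that this identity is what justifies the cheap incremental error update $E \leftarrow E - \|\underline{\bf B}^{(i)}\|_F^2$ in Algorithm \ref{ALgRR}, since the block rows $\B^{(i)}$ of $\B$ contribute additively to $\|\B\|_F^2$.
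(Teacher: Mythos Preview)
Your proof is correct and follows exactly the same approach as the paper: expand the squared norm via the inner product, use Lemma \ref{Lem1} to move $\Q$ across as $\Q^T$, and invoke orthogonality to reduce $\|\Q*\B\|_F^2$ to $\|\B\|_F^2$. The paper's write-up is slightly more terse (it folds the $\|\Q*\B\|_F^2 = \|\B\|_F^2$ step into the second line without comment), but the argument is identical.
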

\begin{proof}
Considering the following straightforward computations
\begin{eqnarray*}
\left\| {\underline{\bf X} - \underline{\bf Q}*\underline{\bf B}} \right\|_F^2 &=& \left\langle {\underline{\bf X} - \underline{\bf Q}*\underline{\bf B},\underline{\bf X} - \underline{\bf Q}*\underline{\bf B}} \right\rangle  \\
&=& \left\| \underline{\bf X} \right\|_F^2 -2 \left\langle {\underline{\bf X},\underline{\bf Q}*\underline{\bf B}} \right\rangle +\left\| \underline{\bf B} \right\|_F^2 \\
&=& \left\| \underline{\bf X} \right\|_F^2 -2 \left\langle {\underline{\bf Q}^T*\underline{\bf X},\underline{\bf B}} \right\rangle +\left\| \underline{\bf B} \right\|_F^2 \\
&=&\left\| \underline{\bf X} \right\|_F^2 - \left\| \underline{\bf B} \right\|_F^2.
\end{eqnarray*}
the theorem is proved. 
\end{proof}

From Theorem \ref{ThmRU}, instead of computing the residual $\underline{\bf X}-\underline{\bf Q}*\underline{\bf B}$ and  its Frobenius norm, we only need to compute the Frobenius norm of the tensor $\underline{\bf B}$ at each iteration which has a smaller size ($R \times  {I_2}\times I_3$). The Frobenius norm of the original data tensor $\underline{\bf X}$ is computed once and the residual is computed by subtracting it from $\|\underline{\bf B}\|^2_F$. This computation can be further reduced as stated in Theorem \ref{THM1}.

\begin{thm}\label{THM1} After executing the $i$-th iteration of Algorithm \ref{ALgRR}, the error term $E_i$ is 
\[
{E_{i}} = {E_{i-1}} - \left\| {{\underline{\bf B}^{(i)}}} \right\|_F^2,\ i=1,2,\dots
\]
where 
${E_{i}} = \left\| {\underline{\bf X} - {\underline{\bf Q}_{i}}*{\underline{\bf B}_{i}}} \right\|_F^2$, 
and $\underline{\bf Q}_i=[\underline{\bf Q}^{(1)},\underline{\bf Q}^{(2)},\ldots,\underline{\bf Q}^{(i)}],\,\,\,\,\,\underline{\bf B}_i=\underline{\bf B}_i=\begin{bmatrix}
\B^{(1)}\\
\B^{(2)}\\
\vdots\\
\B^{(i)}
\end{bmatrix}$.
\end{thm}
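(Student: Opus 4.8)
The plan is to prove the identity by induction on $i$, with Theorem \ref{ThmRU} doing the substantive work. The observation is that Theorem \ref{ThmRU} already rewrites the residual $E_i=\|\X-\Q_i*\B_i\|_F^2$ as $\|\X\|_F^2-\|\B_i\|_F^2$ \emph{provided} that $\Q_i$ is orthonormal in the sense $\Q_i^T*\Q_i=\I$ and that $\B_i=\Q_i^T*\X$; so the theorem reduces to checking that Algorithm \ref{ALgRR} maintains these two invariants from one iteration to the next, together with the fact that $\|\cdot\|_F^2$ is additive under the concatenation $\boxplus_1$. Concretely I would carry the induction hypothesis ``$\Q_{i-1}^T*\Q_{i-1}=\I$ and $\B_{i-1}=\Q_{i-1}^T*\X$'', with base case $\Q_0=[]$, $\B_0=[]$, so that $\Q_0*\B_0$ is the zero tensor and $E_0=\|\X\|_F^2$, matching the initialization in Algorithm \ref{ALgRR}.

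The first step is to verify the orthonormality invariant. The output of ${\rm orth}$ is always orthonormal, so after Line 10 the block $\Q^{(i)}$ satisfies $\Q^{(i)T}*\Q^{(i)}=\I$; the power-iteration steps (Lines 6--8) only overwrite $\Q^{(i)}$ by another orthonormal tensor and so do not affect this. Moreover the argument of ${\rm orth}$ in Line 10 is $\Q^{(i)}-\Q_{i-1}*(\Q_{i-1}^T*\Q^{(i)})$, and left-multiplying this by $\Q_{i-1}^T$ gives $0$ by the hypothesis $\Q_{i-1}^T*\Q_{i-1}=\I$; since ${\rm orth}(\Z)$ spans the same subspace as $\Z$, it follows that $\Q_{i-1}^T*\Q^{(i)}=0$ after Line 10 as well. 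Now $\Q_i=\Q_{i-1}\boxplus_2\Q^{(i)}$, and because transposition interchanges the two concatenation modes we have $\Q_i^T=\Q_{i-1}^T\boxplus_1\Q^{(i)T}$; carrying out the resulting block t-product (which one may check frontal-slice-wise in the Fourier domain, where it is the ordinary block-matrix product) shows that $\Q_i^T*\Q_i$ has diagonal blocks $\Q_{i-1}^T*\Q_{i-1}=\I$ and $\Q^{(i)T}*\Q^{(i)}=\I$ and vanishing off-diagonal blocks, i.e.\ $\Q_i^T*\Q_i=\I$.

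The second step is to verify $\B_i=\Q_i^T*\X$. From Line 11, $\B^{(i)}=\Q^{(i)T}*\X$, and from Line 13, $\B_i=\B_{i-1}\boxplus_1\B^{(i)}$. Using again $\Q_i^T=\Q_{i-1}^T\boxplus_1\Q^{(i)T}$ and distributing $*$ over $\boxplus_1$ gives $\Q_i^T*\X=(\Q_{i-1}^T*\X)\boxplus_1(\Q^{(i)T}*\X)=\B_{i-1}\boxplus_1\B^{(i)}=\B_i$, where the middle equality is the inductive hypothesis. With both invariants established, Theorem \ref{ThmRU} applies to the pair $(\Q_i,\B_i)$ and yields $E_i=\|\X\|_F^2-\|\B_i\|_F^2$. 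Finally, since $\boxplus_1$ merely lists the entries of $\B_{i-1}$ and $\B^{(i)}$ together, the defining formula for the inner product gives $\|\B_i\|_F^2=\|\B_{i-1}\|_F^2+\|\B^{(i)}\|_F^2$, so that $E_i=\|\X\|_F^2-\|\B_{i-1}\|_F^2-\|\B^{(i)}\|_F^2=E_{i-1}-\|\B^{(i)}\|_F^2$, which is the claim.

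The only part that demands real care is the orthonormality bookkeeping of the second paragraph: one must argue that Line 10 produces a block $\Q^{(i)}$ that is simultaneously orthonormal and t-orthogonal to \emph{all} previously accepted blocks $\Q^{(1)},\dots,\Q^{(i-1)}$, and that the re-orthogonalization and power-iteration steps above Line 10 do not spoil this (the potential degenerate case being a rank-deficient argument of ${\rm orth}$, which is ruled out in the non-degenerate situation assumed here). Everything else — the adjoint formula of Lemma \ref{Lem1}, the identity $(\underline{\bf A}\boxplus_2\underline{\bf B})^T=\underline{\bf A}^T\boxplus_1\underline{\bf B}^T$, distributivity of $*$ over $\boxplus_1$, and additivity of $\|\cdot\|_F^2$ under $\boxplus_1$ — is routine, while Theorem \ref{ThmRU} supplies the essential identity for free.
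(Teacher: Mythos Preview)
Your proof is correct and follows essentially the same approach as the paper: apply Theorem~\ref{ThmRU} to get $E_i=\|\X\|_F^2-\|\B_i\|_F^2$, then use additivity of $\|\cdot\|_F^2$ under $\boxplus_1$ to peel off $\|\B^{(i)}\|_F^2$. The paper's own proof is terser and simply asserts $\B_i=\Q_i^T*\X$ without spelling out the orthonormality and block-structure bookkeeping you carry through; your inductive verification of the invariants $\Q_i^T*\Q_i=\I$ and $\B_i=\Q_i^T*\X$ fills in precisely the hypotheses that Theorem~\ref{ThmRU} requires, so your argument is a more complete version of the same proof rather than a different one.
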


\begin{proof}
By the following straightforward computations and using Theorem \ref{ThmRU}, we have
\begin{eqnarray*}
{ E}_{i} &=& \left\| \underline{\bf X} \right\|_F^2 - \left\| {{\underline{\bf B}_{i}}} \right\|_F^2 
= \left\| \underline{\bf X} \right\|_F^2 - \sum\limits_{j = 1}^{i-1} {\left\| {{\underline{\bf B}^{(j)}}} \right\|_F^2} 
- \left\| {{\underline{\bf B}^{(i)}}} \right\|_F^2 \\
&=& \left\| \underline{\bf X} \right\|_F^2 - \left\| {{\underline{\bf B}_{i-1}}} \right\|_F^2
- \left\| {{\underline{\bf B}^{(i)}}} \right\|_F^2
= {E_{i-1}} - \left\| {{\underline{\bf B}^{(i)}}} \right\|_F^2,
\end{eqnarray*}
where we have used the property ${\underline{\bf B}_{i}} = {\underline{\bf Q}_i^{T}}*\underline{\bf X}$. This completes the proof.
\end{proof}

{
Assume that at iteration $i$ of Algorithm \ref{ALgRR}, the stopping criterion is satisfied. This means we have already constructed the block tensors $\underline{\bf B}_i=\begin{bmatrix}
\B^{(1)}\\
\B^{(2)}\\
\vdots\\
\B^{(i)}
\end{bmatrix},$ and $\underline{\bf Q}_i=[\Q^{(1)},\Q^{(2)},\ldots,\Q^{(i)}].$ Then to select the tubal rank precisely, we consider the last block tensors $\B^{(i)}$ and $\Q^{(i)}$ and gradually remove horizontal and lateral slices from them respectively. This procedure is described in Algorithm \ref{Rank_sel}. In view of Theorems \ref{ThmRU} and \ref{THM1}, we do not need to consider the block tensor $\Q^{(i)}$ and only we need to remove horizontal slices from the tensor $\B^{(i)}$.
}

\section{Computational complexity}\label{sec:com}
As discussed in \cite{zhang2018randomized}, the computational complexity of Algorithm \ref{ALG:t-SVD} for a tensor of Size $I\times J\times K$ is 
$\mathcal{O}(IJK{\rm log}(K))+\mathcal{O}(IJK{\rm min}(I,J))$ where the first term is for transforming 
the tensor to the Fourier domain while the second is for tensor decomposition. In contrast, the 
complexity of randomized Algorithm \ref{ALgRR} is $\mathcal{O}(IJKb)$ which for $b\ll {\rm min}\{I,J\}$ is significantly lower.

\section{Simulations}\label{Sec:Sim}
In this section, we test the proposed randomized algorithm using synthetic and real-world data. All numerical simulations were performed on a laptop computer with 2.60 GHz Intel(R) Core(TM) i7-5600U processor and 8GB memory. In all our simulations, we set the power iteration $q=1$. The compression ratio is defined as $\frac{\rm Number\,of\,parameters\,of\,the\,original\,tenors}{\rm Number\,of\,parameters\,in\,the\,compressed\,model}$. We have used some \textsc{Matlab} function from the toolbox \url{https://github.com/canyilu/Tensor-tensor-product-toolbox}. The codes are available at \url{https://github.com/SalmanAhmadi-Asl/Fixed-Precision-t-SVD}.
\begin{exa}\label{Ex1}
In this example, we apply our algorithm to a synthetic data tensor of low tubal-rank. Let $\underline{\bf U},\,\underline{\bf V}\in\mathbb{R}^{n\times n\times n},$ be random orthogonal tensors with respect to the t-product. To generate such tensors, we first generate random standard Gaussian tensors (zero mean and unit variance), then compute their t-QR decomposition and take the Q part in our simulations. Also for the middle tensor $\underline{\bf S}\in\mathbb{R}^{n\times n\times n}$, we consider the following three cases
\begin{itemize}
\item Case I. The diagonal tensor $\underline{\bf S}$ with only $R$ nonzero tubes. The components of all tubes were also standard Gaussian.

\item  Case II. (Fast polynomial decay) \cite{tropp2017practical} Consider the frontal slices of the tensor $\underline{\bf S}$ defined as follows
\begin{eqnarray*}
\underline{\bf S}(:,:,i)={\rm diag}\left( \underbrace {1, \ldots ,1}_R,{2^{ - 2}}{,3^{ - 2}}{,4^{ - 2}}, \ldots,\right.\\
\left.{\left( {n - R + 1} \right)^{ - 2}}
 \right),\,\,i=1,2,\ldots,n.
\end{eqnarray*}


\item Case III. (Fast exponentially decay) \cite{tropp2017practical} Consider the frontal slices of the tensor $\underline{\bf S}$ defined as follows
\begin{eqnarray*}
\underline{\bf S}(:,:,i)={\rm diag}\left( \underbrace {1, \ldots ,1}_R,{10^{ - 1}}{,10^{ - 2}}{,10^{ - 3}}, \ldots, \right.\\
\left.{10^{ - \left( {n - R} \right)}}
 \right),\,\,i=1,2,\ldots,n.
\end{eqnarray*}


\end{itemize}
We generate a new tensor corrupted with a noise term as follows
\[
\underline{\bf X} = \underline{\bf U}*\underline{\bf S}*\underline{\bf V}^T+\delta\,\frac{\underline{\bf Y}}{{\left\|{\underline{\bf Y}}\right\|}},
\]
where $\underline{\bf Y}\in\mathbb{R}^{n\times n\times n},\,n=100,150,\ldots,500$ is a standard Gaussian data tensor with zero mean and unit variance and $\delta=0.01$. We set parameter $R=10$ for tensor Cases I-III and apply Algorithm \ref{ALgRR} (with $\epsilon=0.01$ and block size $b=100$) to them. The interesting point is that Algorithm \ref{ALgRR} estimated the true numerical tubal rank 10 for all tensors of different dimensions. This convinced us that the proposed algorithm is a reliable algorithm for tubal rank estimation. 
Then to examine the speed-up of our algorithm, we used the truncated t-SVD with the estimated tubal rank 10. The running times of the proposed algorithm and the truncated t-SVD algorithm for different dimensions for Cases I-III are reported in Figure \ref{figEX1}. The linear scaling of Algorithm \ref{ALgRR} compared to the truncated t-SVD is visible. In all our experiments, the truncated t-SVD provided better accuracy but still quite close to the accuracy achieved by of the proposed algorithm.


\begin{figure}
\begin{center}
\includegraphics[width=8 cm,height=6 cm]{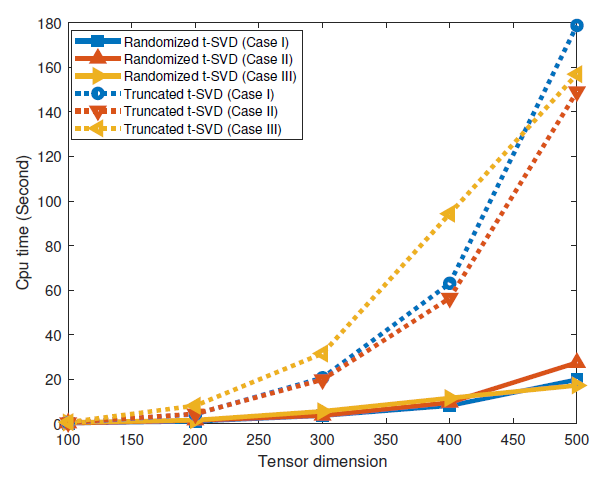}
\caption{\small{Example \ref{Ex1}. Running times comparison against tensor dimensions for Cases I-III with tubal rank $R=10$.}}\label{figEX1}
\end{center}
\end{figure} 

 
\end{exa}

%
%


\begin{exa}\label{Exa_2}
{
In this experiment, we consider two new synthetic third-order tensors to check again the performance of the proposed algorithm. Consider  third-order tensors of size $500\times 500\times 500$ whose components are generated as follows}
\begin{itemize}
    \item {Case I:\quad ${\X}(i,j,k)=\frac{1}{i+j+k}$,}\\
    
    \item {Case II:\quad ${\X}(i,j,k)=\frac{1}{\left({i^{0.5}+j^{0.5}+k^{0.5}}\right)^{1/{0.5}}}$,}\\
\end{itemize}
{where $1\leq i,j,k\leq 500$. The (numerical) tubal rank of the tensor Case I is 21 and it is $14$, for the tensor Case II. We have used the \MATLAB code {\sffamily tubalrank.m} included in {\it Tensor-Tensor Product Toolbox 2.0} to estimate the tubal rank of tensors. We set $\epsilon=0.001$ with block size $b=20$ and apply Algorithm \ref{ALgRR} to these synthetic tensors. Here, Algorithm \ref{ALgRR} gave the tubal rank 14 for the tensor Case I and the tubal rank 6 for the tensor Case II. Then we applied the truncated t-SVD to the data tensors (Case I and Case II) with the tubal ranks 14 and 6, respectively. Also, we considered the tolerances $\epsilon=0.01,\,0.10$ for which Algorithm \ref{ALgRR} estimated the tubal ranks as
\begin{itemize}
    \item For $\epsilon=0.01$, Case I (tubal rank 8) $\&$  Case II (tubal rank 4)
    \item For $\epsilon=0.1$, Case I (tubal rank 3) $\&$ Case II (tubal rank 2)
\end{itemize}
Then, we applied the truncated t-SVD with the mentioned tubal ranks to the underlying data tensors. The running times of the proposed algorithm and the truncated t-SVD are reported in Table \ref{tab:resnet_inference}. It is seen that the Algorithm \ref{ALgRR} outperforms the truncated t-SVD in terms of running time.}

\begin{table*}[!ht]
    \centering
    \caption{Example \ref{Exa_2}, The comparison of running times and relative errors of the proposed algorithm and the truncated t-SVD.}
    \label{tab:resnet_inference}
    \begin{tabular}{l c c}
    \hline
        \multirow{2}{*}{} & \multicolumn{2}{c}{Data Tensor Case I}\\\hline
         $\epsilon$ &  Proposed & Truncated t-SVD  \\\hline
         {$0.001$} & {\bf 9.12} Sec (Error=9.2512e-04) & 43.41 Sec (Error=2.4598e-04)\\
        {$0.01$} & {\bf 6.22} Sec (Error=0.0041) & 33.41 Sec (Error=0.0014) \\
        {$0.10$} & {\bf 4.57} Sec (Error=0.0572) & {22.53 Sec (Error=0.0252) }
        \\\hline
         \multirow{2}{*}{} & \multicolumn{2}{c}{Data Tensor Case II}\\\hline
        $\epsilon$  &  Proposed & Truncated t-SVD  \\\hline
        {$0.001$} & {\bf 11.20} Sec (Error7.9713e-04) & 44.32 Sec (Error=2.4598e-04)\\
        {$0.01$} & {\bf 7.23} Sec (Error=0.0076) & 35.13 Sec (Error=0.0027) \\
        {$0.10$} & {\bf 5.32} Sec (Error=0.0720) & {{21.13} Sec (Error=0.0313) }\\\hline
    \end{tabular}
\end{table*}


\end{exa}

\begin{exa}\label{Ex2}
In this example, we apply our proposed algorithm to compress the Yale B dataset. This dataset includes images of size $192\times 168$ for $38$ persons under $30$ different illumination conditions and as a results a tensor of size $192\times 168 \times 30\times 38$ is produced. We reshape this tensor to a third-order tensor of size $192\times 1140 \times 168$ and apply our proposed algorithm with approximation error bounds $\epsilon=0.05,\,0.04,\,0.03$. The approximate tubal ranks found by the our algorithm were $32,\,46,\,57$ with corresponding compression ratios $5.0147,\,3.4530,\,2.7646$, respectively. Then, for these specific tubal ranks, we also applied the truncated t-SVD algorithm. The running times and relative errors of the truncated t-SVD and the proposed algorithm are reported in Table \ref{tab:resnet}. The reconstructed images for the tubal rank $R=32$ and $\epsilon=0.05$ are depicted in Figure \ref{figEX3}. Here again, the superiority of the proposed algorithm over the truncated t-SVD in terms of running time (seconds) is visible.

\begin{table*}[!ht]
    \centering
    \caption{Example \ref{Ex2}, The comparison of running times and relative errors of the proposed algorithm and the truncated t-SVD.}
    \label{tab:resnet}
    \begin{tabular}{l c c}
        \multirow{2}{*}{} & \multicolumn{2}{c}{}\\\hline
         $\epsilon$ &  Proposed & Truncated t-SVD  \\\hline
                 {$0.03$} & {\bf 11.205} Sec (Error=0.0285) & 52.405 Sec (Error=0.0273) \\
      {$0.04$} & {\bf 9.0678} Sec (Error=0.0395) & 39.33 Sec (Error=0.0385) \\
         {$0.05$} & {\bf 7.7945} Sec (Error=0.0499 ) & 26.16 Sec (Error=0.0495)
        \\\hline
    \end{tabular}
\end{table*}


\begin{figure}
\begin{center}
\includegraphics[width=9 cm,height=5 cm]{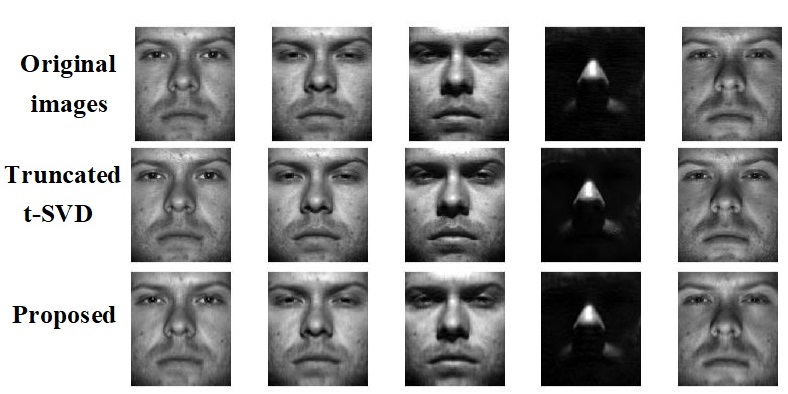}
\caption{\small{Example \ref{Ex2}. Reconstructed images with the tubal rank $R=35$ and $\epsilon=0.05$ using the truncated t-SVD and the proposed algorithm.}}\label{figEX3}
\end{center}
\end{figure}

\end{exa}


\begin{exa}\label{Ex4}
{
In this example, we apply Algorithm \ref{ALgRR} to the COIL-100 dataset \cite{nene1996columbia} which is the extension of the COIL-20 dataset. This data tensor consists of 7200 color images (100 objects under 72 rotations per object, see Figure \ref{Coil_sample} for some samples of this data tensor). The size of each image is $128\times 128 \times 3$ and the whole data is represented as a fifth-order tensor of size $128\times 128\times 3\times 100\times 72$. We only consider 30 objects under 30 different rotations, so we have a fifth order tensor of size $128\times 128\times 3 \times 30\times 30$. Since the proposed algorithm is applicable for third-order tensors, we first reshaped the tensor to a third-order tensor of size $384\times 384\times 300$, then applied the algorithm to compress it. Finally, the approximate tensors were reshaped back to the original size of the dataset. Here, we considered the error bound $\epsilon=9e-2$ and block size $b=50$ in our computations. Algorithm \ref{ALgRR} gave the tubal rank 45 and for this tubal rank, we applied the truncated t-SVD to compress the COIL100 dataset. We achieved again a significant acceleration ($\times 7.5$ speed-up) compared with the truncated t-SVD. The reconstructed images obtained by our proposed algorithm and the truncated t-SVD are visualized in Figure \ref{COIL_Res}. We see from Figure \ref{COIL_Res},  that the reconstructed images by the proposed algorithm are close to the ones computed by the truncated t-SVD but with much less running time. Also, we tried to use other tolerances to further examine the efficiency of  Algorithm \ref{ALgRR}. We found that for very small tolerances, the tubal rank may be high for which the randomized algorithms are not applicable. Due to this issue we considered the moderate tolerances $\epsilon=0.10,\,0.05,\,0.07$ for which Algorithm \ref{ALgRR} estimated the relatively small tubal ranks $39,\,96$ and $64$, respectively. The truncated t-SVD was then applied to the COIL-100 dataset with the mentioned tubal ranks. The running times and corresponding relative errors are reported in Table \ref{tab:resnet_inferenc}. The results clearly show that the proposed algorithm is more efficient and faster than the truncated t-SVD for low tubal rank approximation of tensors.  
}

\begin{figure}
\begin{center}
\includegraphics[width=1\linewidth
]{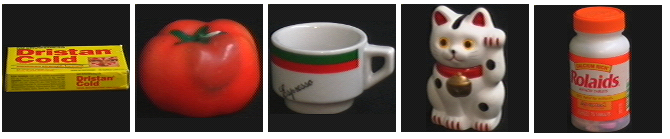}
\caption{\small{Some random samples of the COIL-100 dataset.}}\label{Coil_sample}
\end{center}
\end{figure}

\begin{figure}
\begin{center}
\includegraphics[width=.9\linewidth
]{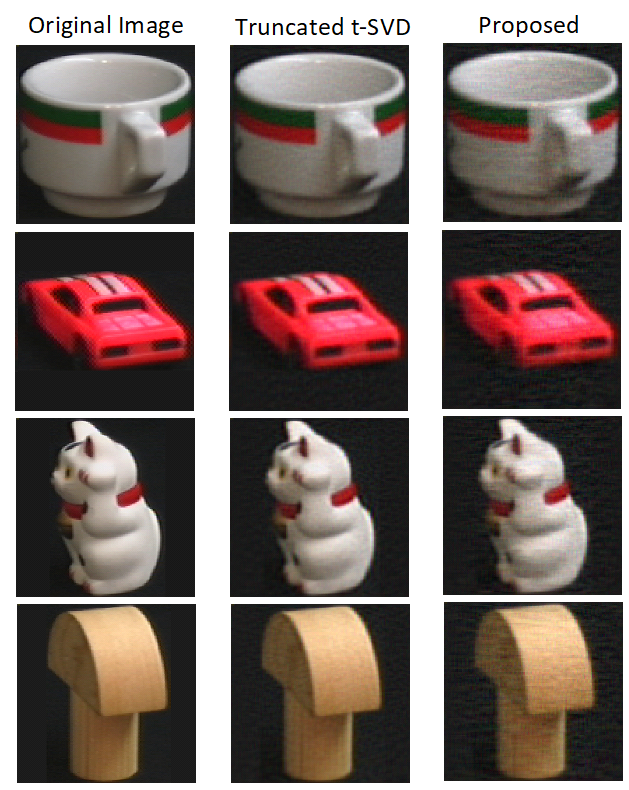}
\caption{\small{Example \ref{Ex4}, Reconstructed images with the tubal rank $R=45$ and $\epsilon=9e-2$ using the truncated t-SVD and the proposed algorithm.}}\label{COIL_Res}
\end{center}
\end{figure}
\end{exa}

\begin{table*}[!ht]
    \centering
    \caption{Example \ref{Ex4}, The comparison of running times and relative errors of the proposed algorithm and the truncated t-SVD.}
    \label{tab:resnet_inferenc}
    \begin{tabular}{l c c}
        \multirow{2}{*}{} & \multicolumn{2}{c}{}\\\hline
         $\epsilon$ &  Proposed & Truncated t-SVD  \\\hline
        {$0.05$} & {\bf 9.18} Sec (Error=0.0497) & 120.81 Sec (Error=0.0286) \\
         {$0.07$} & {\bf 8.31} Sec (Error=0.0697) & 66.16 Sec (Error=0.0419)\\
        {$0.10$} & {\bf 4.265} Sec (Error=0.0990) & 28.08 Sec (Error=0.0616) 
        \\\hline
    \end{tabular}
\end{table*}

\section{Conclusion and future works}\label{Sec:Conclu}
In this paper, we proposed a new randomized fixed-precision algorithm for fast computation of tensor SVD (t-SVD). Unlike the existing randomized low tubal rank approximation methods, the proposed algorithm finds an optimal tubal rank and the corresponding low tubal rank approximation automatically given a data tensor and an approximation error bound. Simulations on synthetic and real-world data-sets confirmed that the proposed algorithm is efficient and applicable. This algorithm can be generalized to higher order tensors according to the paper \cite{martin2013order}. This will be our future work and we are planning to use it to develop fast tensor completion algorithms similar to the strategy done in \cite{che2022fast}. A detailed theoretical analysis of the proposed algorithm needs to be investigated. This is also our future work.

\section{Acknowledgements} 
After acceptance of the paper, the author found similar incremental algorithms for the computation of the t-SVD in \cite{ugwu2021tensor,ugwu2021viterative}. Thanks Dr. Ugochukwu Ugwu for bringing these works to the author's attention. The author would like to thank Stanislav Abukhovich for his help in the implementation of the algorithms. He introduced the author the \MATLAB function {\sffamily cat} to perform the tensor concatenation operation. He also pointed out constructive comments on the proof of Theorem 3. 
The author is also indebted to three anonymous reviewers for their constructive and
useful comments which have greatly improved the quality of the paper.  The author was partially supported by the Ministry of Education and Science of the
Russian Federation (grant 075.10.2021.068). \\



\ifCLASSOPTIONcaptionsoff
  \newpage
\fi
      \bibliography{BIBTEX_RAN}

\end{document}